\newtheorem{theorem}{Theorem}[section]
\newtheorem{lemma}[theorem]{Lemma}
\newtheorem{corollary}[theorem]{Corollary}
\newtheorem{proposition}[theorem]{Proposition}
\newtheorem{conjecture}[theorem]{Conjecture}
\newtheorem{definition}[theorem]{Definition}
\newtheorem{example}[theorem]{Example}
\newtheorem{remark}[theorem]{Remark}
\DeclareMathOperator{\Span}{Span}
\DeclareMathOperator{\Int}{Int}
\DeclareMathOperator{\Con}{Con}
\DeclareMathOperator{\Lie}{Lie}
\DeclareMathOperator{\Const}{Const}
\def\A{\mathcal{A}}
\def\E{\mathcal{E}}
\def\I{\mathcal{I}}
\def\R{\mathbb{R}}
\def\Z{\mathbb{Z}}
\def\X{\mathfrak{X}}
\def\dd{\mathrm{d}}
\def\ad{\mathrm{ad}}
\def\T{\mathrm{T}}
\def\vphi{\varphi}
\def\vd{\varDelta}
\def\ra{\rightarrow}
\def\leqs{\leqslant}
\def\geqs{\geqslant}
\title{\LARGE \bf
	Global Controllability Criteria and Motion Planning of Regular Affine Systems With Drifts
}
\author{Zhengping Ji, Xiao Zhang and Daizhan Cheng
	\thanks{This work is supported partly by NNSF 62073315 of China, and China Postdoctoral Science Foundation 2021M703423 and 2022T150686.}
\thanks{D. Cheng is with the Academy of Mathematics and Systems Science, Chinese Academy of Sciences, Beijing 100190, P.R.China, {\tt\small dcheng@iss.ac.cn}}
\thanks{X. Zhang is with the National Center for Mathematics and Interdisciplinary Sciences \& the Key Laboratory of Systems and Control, Academy of Mathematics and Systems Science, Chinese Academy of Sciences, Beijing 100190, P.R.China, {\tt\small xiaozhang@amss.ac.cn}}
\thanks{Z. Ji is with the Key Laboratory of Systems and Control, Academy of Mathematics and Systems Science \& School of Mathematical Sciences, University of Chinese Academy of Sciences, Beijing 100190, P.R.China, {\tt\small jizhengping@amss.ac.cn} }
}
\begin{document}

	\maketitle
	\thispagestyle{empty}
	\pagestyle{empty}

\begin{abstract}
	
In this article, we give a condition for the global controllability of affine nonlinear control systems with drifts on Euclidean spaces. Under regularity assumptions, the condition is necessary and sufficient in the codimension-1 and codimension-2 cases, and holds for systems of higher codimensions under mild restrictions. We then investigate motion planning problems for codimension-1 affine systems, and give proof of the global existence of the lift to control curves for certain drifted systems using the homotopy continuation method. 

\end{abstract}

\section{Introduction}\label{S1}

Finding necessary and sufficient conditions for the global controllability of affine control systems
\begin{align}\label{0.1}
	\dot{x}(t)=f|_{x(t)}+\sum_{i=1}^{m}u^i(t)g_i|_{x(t)}
\end{align}
with  smooth vector fields $f$, $\{g_i\}_{i=1}^m$ and measurable, essentially bounded controls $\{u^i\}_{i=1}^m$ is a basic problem in control theory. It has been investigated since the 1930s when Chow and Rachevskii provided bracket-generating conditions \cite{agr20} for the global controllability of the driftless version of (\ref{0.1}) as 
\begin{align}\label{1.1}
	\dot{x}(t)=\sum_{i=1}^{m}u^i(t)g_i|_{x(t)},
\end{align}
where $\{u^i\}_{i=1}^m$, $\{g_i\}_{i=1}^m$ are as in (\ref{0.1}). The global controllability condition of (\ref{0.1}) has been solved in some simple cases, such as when the drift vector field $f$ lies in the Lie algebra generated by $\{g_i\}_{i=1}^m$ \cite{agr04}, for planar systems \cite{su07}, and for codimension-$1$ systems under assumptions requiring global generating families \cite{hu82}. The general case, however, remains open. 

Suppose the system (\ref{0.1}) is defined on a manifold $M$. For $\forall p\in M$, the reachable set of $p$ is defined as
$$
\A(p):=\bigcup_{T\geqslant0}\A(p,T),
$$
where 
{\small $$
	\A(p,T):=\Bigg\{x(T)\Bigg|
	\begin{array}{l}
		\exists u^i\in L^1([0,T],\R), i=1,\cdots,m, \\
		(x(t),u(t)) ~\text{satisfying (\ref{0.1})},~x(0)=p
	\end{array}\Bigg\}.
	$$}
When $\A(p)$ contains an open set with respect to the topology of $M$, we say that the system (\ref{0.1}) is strongly attainable at $p$. If for $\forall p\in M$, $\A(p)=M$, we call (\ref{0.1}) globally controllable.

For globally controllable driftless affine systems, the motion planning problem has been widely considered. Its basic objective is: given any pair $(p,q)$ in the state space of a control system, to design a control that yields an admissible trajectory steering $p$ to $q$. Several methods have been proposed for this problem, such as the nilpotent approximation \cite{ch13}, the loop method \cite{son}, and steering with sinusoidal controllers \cite{mu}. However, there have been few investigations of motion planning problems for drifted affine systems due to the difficulty of dealing with drift and the lack of controllability conditions \cite{zu}.

Over the past thirty years, the homotopy continuation method (HCM) introduced by \cite{su92} has been applied to motion planning of affine systems and has shown good performance in numerical practice \cite{al}. The basic idea of HCM for driftless affine systems is to lift a curve in $M$ to one in the control space ${\cal U}$ through the endpoint map $\E_p:{\cal U}\ra M$, $u\mapsto\gamma_{u,p}(1)$, where $\gamma_{u,p}$ is the solution of the control system (\ref{1.1}) corresponding to $u$ starting from $p\in M$. If a path $\gamma(t)$ connecting $q_0$ and $q$ on $M$ can be lifted through $\E_p$ to a path $\zeta$ on $\cal U$ starting from $u_0$, satisfying $\E_p(u_0)=q_0$, that is, 
$$
\exists\zeta:[0,1]\ra {\cal U}, ~{\text{s.t.}}~\gamma(t)=\E_p(\zeta(t)), ~\forall t\in[0,1],
$$
then the trajectory $\zeta(1):=(u_1^*(t),\cdots,u_m^*(t))\in \cal U$ will give the required controls $u_1^*(t),\cdots,u_m^*(t)$ driving system (\ref{0.1}) from $p$ to $q$ by time $t=1$. 

A sufficient condition for the curve $\gamma(t)$ to be lifted is the existence of the solution to the following path-lifting equation (PLE):
\begin{align}\label{0.3}
	D\E_p|_{\zeta(t)}\frac{\dd \zeta(t)}{\dd t}=\frac{\dd\gamma(t)}{\dd t}.
\end{align}
If (\ref{0.3}) admits a global solution on $[0,1]$, then the motion planning problem is solved by $u(t)=\zeta(1)$ (numerically, (\ref{0.3}) can be solved by a finite-dimensional approximation of the control space \cite{al}). Thus, the problem is reduced to avoiding singular points of $D\E_p$ and finding conditions on the vector fields of (\ref{1.1}) for the PLE to be solved globally on $[0,1]$. It was proved \cite{ch06} that if the Moore-Penrose inverse of $D\E_p$ at $u$, denoted by $P(u)$, has linear growth of $\|u\|$ on any compact set, then (\ref{0.3}) is globally solved on $[0,1]$. Such conditions are satisfied for some special types of driftless systems \cite{ch06,su93}, but so far there are no known results on the application of HCM to drifted systems. {This is because the domain of the endpoint map should be changed to a product space for drifted cases, and due to the existence of the drift, the time parameter cannot be restricted to $[0,1]$ as in the driftless case, which will also change the PLE consequently.}

In this article, we consider affine systems defined on $\R^n$ with nonzero drifts. We will show that when the control Lie algebras of the corresponding driftless systems are regular and {the controls are $L^2$}, the sufficient conditions given in \cite[Theorem 6.1]{che10} are also necessary for global controllability of systems of codimension $1$, and of systems of codimension $2$ or higher under certain restrictions. We then discuss the motion planning problems of globally controllable affine systems. When a system of codimension $1$ allows globally generating vector fields, we provide a simple steering algorithm; and for general drifted systems, we adopt the HCM and show the global existence of the lifting curve when the vector fields satisfy certain restrictions.

\section{Necessary and Sufficient Conditions for Controllability: Codimension $1$}\label{S2}

Consider the system (\ref{0.1}) defined on $\R^n$. From now on we denote by $G:=\Lie\{g_i\}$ the Lie algebra generated by $\{g_i\}_{i=1}^m$, i.e. the smallest Lie algebra containing $\{g_i\}_{i=1}^m$.	We make the following assumption:
\begin{enumerate}
	\item[\bf (A1)] $G$ is regular, i.e. $\dim G|_x=\Const$, $\forall x\in \R^n$.
\end{enumerate}

Let $M$ be a manifold and $\vd$ an involutive distribution on $M$ (in this article we assume that all distributions are smooth). Given $x\in M$, denote by $\I_\vd(x)$ the maximal integral submanifold passing through $x$ corresponding to $\vd$. Under the assumption {\bf (A1)}, we define the codimension of the system (\ref{0.1}) as $n-\dim G$, and if the system is of codimension $k$, then every integral manifold $\I_{G}(x)$ is an $(n-k)$-dimensional injectively immersed edgeless submanifold in $\R^n$, giving $\R^n$ a codimension-$k$ foliation structure \cite{ca00}, where every maximal integral submanifold is a leaf. In fact, $\I_G(x)$ is the reachable set of $x$ corresponding to the driftless system (\ref{1.1}). This is the classical Chow-Rashevskii theorem \cite{agr20}.

Given a manifold $M$ and a vector field $X\in\X(M)$, denote by $\psi^X_t(x)$ the flow of $X$ starting from $x\in M$ after time $t$. The critical question in the problem of controllability of affine nonlinear system (\ref{0.1}) is whether $\{\psi_{t}^{f}(x)\}_{t<0}$ can intersect the reachable set of $x$ (we assume that all vector fields mentioned are complete).

\begin{lemma}\label{l1.11}
	Consider the control system (\ref{0.1}) of codimension $1$. Denote by $\A_f(x)$ the reachable set of a state $x\in M$ with respect to (\ref{0.1}). Consider the following system 
	\begin{align}\label{0.2}
		\dot{x}(t)=-f|_{x(t)}+\sum_{i=1}^{m}u^i(t)g_i|_{x(t)}
	\end{align}
	where $f$, $\{g_i\}$ are vector fields defined as in (\ref{0.1}). Denote by $\A_{-f}(x)$ the reachable set of $x\in M$ with respect to (\ref{0.2}). Then the following conditions are equivalent:
	\begin{enumerate}
		\item The system (\ref{0.1}) is globally controllable;
		\item The system (\ref{0.2}) is globally controllable;
		\item $\A_f(p){\cap}\A_{-f}(p)\neq\varnothing$, $\forall p\in M$.
	\end{enumerate}
\end{lemma}
\begin{proof}
	$\forall p,q\in M$, $q\in\A_{-f}(p)$ is equivalent to $p\in\A_f(q)$, hence $\A_{-f}(p)=M$, $\forall p\in M$ is equivalent to $\A_f(p)=M$, $\forall x\in M$.
	
	On the other hand, if $\A_f(p){\cap}\A_{-f}(p)\neq\varnothing$, then there exists a control $u:[0,1]\ra \R$ s.t. the trajectory $\gamma_u:[0,T]\ra M$ corresponding to $u$ satisfies $\gamma_u(0)=\gamma_u(T)=p$. Choose a neighbourhood $U$ of $p$ such that there exists a point $y\in\gamma_u([0,T])$ satisfying $q\notin \A_f(p)\cap U$; then there is a neighbourhood $V$ of $p$ such that $V\subset\A_f(q)$. Also note that $A_f(q)\subset\A_f(p)$, one can see that $A_f(p)$ contains an open neighborhood of $p$, hence {by the well known fact that pointwise local controllability implies global controllability \cite{gra}}, the system is globally controllable.		
\end{proof}

\begin{proposition}\label{p2.1}
	Consider system (\ref{0.1}) of codimension $1$. {Suppose $\exists p\in M$ such that there is a open neighbourhood $U_p$ of $p\in M$ such that $\I_G(p)$ is dense in $U_p$, then the system is globally controllable from any point $q_1\in U_p$ to another point $q_2\in U_p$, if and only if, the system is bracket-generating} at each point in $U_x$.
\end{proposition}
\begin{proof}
	We need to show that $\A_f(q){\cap}\A_{-f}(q)\neq\varnothing$, $\forall q\in U_p$. Since $\I_G(p)$ is dense in an open set of $M$, $\A_f(q)$ and $\A_{-f}(q)$ are both dense in $U_p$; also they both have nonempty interior {with respect to the topology of $M$}, so there are two open and dense subsets in $U_p$, and they must intersect. The conclusion follows by Lemma \ref{l1.11}.
\end{proof}

Therefore, {for the case where (\ref{0.1}) is defined over $\R^n$}, we make another assumption:
\begin{enumerate}
	\item[\bf (A2)]\label{a2} {For any open set $U\subset\R^n$, $\I_G(x)$ is not dense in $U$.}
\end{enumerate}
This is equivalent to say that $\I_G(x)$ is an embedding in $\R^n$, $\forall x\in \R^n$ \cite[Lemma 3.2]{ro17}.

The following lemma is crucial for our proof of the controllability criteria.

\begin{lemma}\label{l1.2}
	A maximal integral manifold corresponding to a codimension-$1$ involutive distribution satisfying {\bf (A2)} separates $\R^n$ into at least two connected components.
\end{lemma}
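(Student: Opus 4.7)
The plan is to argue by contradiction, exploiting the simple connectedness of $\R^n$ and the codimension-one structure. By Lemma~\ref{l1.1} together with (A2), the leaf $L := \I_G(x)$ is an embedded codimension-one submanifold of $\R^n$ without boundary. Since $\R^n$ is contractible, the line bundle $T\R^n/G$ is trivial, so $\vd$ is transversely orientable; in particular, the normal bundle of $L$ is trivial, and $L$ admits a tubular neighborhood diffeomorphic to $L \times (-1, 1)$. Removing $L$ from this tubular neighborhood yields two disjoint, path-connected open sets $N^+$ and $N^-$ on the two transverse sides of $L$.

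Assume, for contradiction, that $\R^n \setminus L$ is connected. Pick $p^+ \in N^+$ and $p^- \in N^-$, connect them by a smooth path $\alpha$ in $\R^n \setminus L$, and concatenate $\alpha$ with the short transverse arc in the tubular neighborhood that returns from $p^-$ back to $p^+$ through a single point $y \in L$. The result is a smooth loop $\gamma \colon S^1 \to \R^n$ meeting $L$ transversally at exactly one point.

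Since $\R^n$ is simply connected, $\gamma$ bounds a continuous disk; by smooth approximation and a transversality perturbation rel.\ boundary, we would obtain a smooth map $H \colon D^2 \to \R^n$ with $H|_{\partial D^2} = \gamma$ and $H$ transverse to $L$. Then $H^{-1}(L) \subset D^2$ is a properly embedded one-dimensional submanifold whose boundary coincides with $\gamma^{-1}(L)$, a single point, contradicting the fact that a compact one-manifold with boundary has an even number of boundary points.

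The principal technical obstacle is that the required compactness of $H^{-1}(L)$ demands $L$ to be closed as a subset of $\R^n$, not merely properly embedded in the topological sense of Lemma~\ref{l1.1}. Establishing this closedness -- equivalently, showing that the saturated closure $\overline{L}$ cannot contain any other leaf -- would require additional foliation-theoretic input, using the codimension-one structure and (A2) to control the transverse dynamics (and the associated leaf holonomy) near any hypothetical accumulation leaf. I expect this step to be the main hurdle in turning the sketch into a fully rigorous proof.
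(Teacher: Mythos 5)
Your proposal is, in essence, a self-contained proof of the very topological fact that the paper imports as a black box: the paper's proof of Lemma~\ref{l1.2} consists of quoting Feighn's separation theorem \cite{fe88} (a codimension-one immersion with compact preimages of compact sets into a manifold $N$ with $H_1(N;\Z/2\Z)=0$ has disconnected complement) and asserting that leaves satisfying (A2) qualify. Your transverse-loop-plus-spanning-disk parity argument is exactly how one proves that statement for $N=\R^n$, so up to packaging the two routes coincide; yours is more elementary and, usefully, makes the hypotheses visible.

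That visibility matters, because the gap you flag at the end is genuine, and it is precisely the gap in the paper's own proof. Feighn's hypothesis (compact preimages of compact sets) is, for an injective immersion, equivalent to being an embedded \emph{closed} submanifold, whereas Lemma~\ref{l1.1} plus (A2) deliver only embeddedness, i.e.\ properness in the foliation-theoretic sense (local closedness). The two notions genuinely differ here, and the difference cannot be repaired from (A2) alone once $n\geqslant 3$: take the Reeb foliation of $S^3$ and delete a point from the interior of one solid torus, obtaining a smooth codimension-one foliation of $\R^3$ in which no leaf is dense (indeed (A2'), and finite depth, also hold). An interior leaf $L_0$ of the surviving Reeb component is embedded, with $\overline{L_0}=L_0\cup T^2$, yet inside its solid torus it is a section of the circle fibration over the open disk, so its complement there is an open interval bundle over the disk, hence connected; one then checks that $\R^3\setminus L_0$ is connected, so $L_0$ separates nothing. (Note also that for a non-closed $L$ the set $\R^n\setminus L$ is not open, so even your step ``connected, hence joined by a smooth path avoiding $L$'' already presupposes closedness.) So your assessment is exactly right: compactness of $H^{-1}(L)$, i.e.\ closedness of the leaf, is the crux -- but it is not a hurdle that more foliation theory can clear under the stated hypotheses; it requires strengthening (A2) to an assumption that forces leaves to be closed subsets (equivalently, depth one). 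The one case where this is automatic is $n=2$, where by Haefliger--Reeb theory every leaf of a foliation of the plane is a closed embedded line, which is why the planar version of the lemma, and your argument for it, are sound.
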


\begin{proof}
	According to \cite{fe88}, consider an immersion $f: M\ra N$, where $M$ and $N$ are edgeless manifolds of dimensions $n-1$ and $n$, respectively, if the inverse images of compact sets are compact and $H_1(N;\Z/2\Z)=0$, then $N\backslash f(M)$ is not connected. In our context, since the codimension $1$ integral manifolds satisfying {\bf (A2)} are embeddings and $H_1(\R^n;\Z/2\Z)=0$, they satisfy these conditions and hence $\R^n\backslash\I_G(x)$ is not connected.
\end{proof}

We call a system (\ref{0.1}) regular if its corresponding Lie algebra $G$ satisfies both {\bf (A1)} and {\bf (A2)}. The key argument of the controllability criteria is based on the so-called shift of the vector fields of regular systems, defined as follows.

\begin{definition}[\cite{che10}]\label{d1.1}
	Let $M$ be a smooth manifold and $\vd$ a distribution on $M$. Given a vector field $X\in\X(M)$, by $X\in\vd$ we mean that $X|_x\in\vd|_x$, $\forall x\in M$.
	\begin{enumerate}
		\item Given $x,y\in M$ and tangent vectors $\xi\in T_xM$, $\eta\in T_yM$, if  $\exists g_1,\cdots,g_m\in\vd$, $t_1,\cdots,t_m\geqslant0$, s.t.
		\begin{align}\label{1.2}
			x=\vphi_{t_m}^{g_m}\cdots\vphi_{t_1}^{g_1}(y),~
			\xi=(\vphi_{t_m}^{g_m})_*\cdots(\vphi_{t_1}^{g_1})_*(\eta),
		\end{align}
		then $\xi$ is called the $\vd$-shift of $\eta$ from $y$, denoted by $\xi=\vd_*(y,x)(\eta)$.
		\item Given a subset $S\subset \X(M)$, the $\vd$-shift of $S$ to a point $x$ is defined as 
		\begin{align}\label{1.3}
			\vd_*S(x):=\{\vd_*(y,x)(S|_y)|y\in \I_{\vd}(x)\}\subset T_xM.
		\end{align}
		\item A vector field $X$ is called $\vd$-invariant, if $\vd_*X(x)=X|_x$, $\forall x\in M$. This is equivalent to that $[X,Y]=0$, $\forall Y\in\vd$.
	\end{enumerate}

\end{definition}

With the above notions, now we state the main theorem for codimension-$1$ cases.

\begin{theorem}\label{t1.1}
	Consider the system (\ref{0.1}) of codimension $1$ satisfying {\bf (A1)(A2)}. It is globally controllable if and only if
	\begin{align}\label{1.4}
		0\in\Int\Con\{G_*f(x),G|_x\},\quad\forall x\in\R^n,
	\end{align}
	where $\Con$ is the convex hull of vectors in $T_xM$.
	
\end{theorem}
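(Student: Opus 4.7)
The plan is to treat sufficiency as established in \cite[Theorem 6.1]{che10} and concentrate on the contrapositive of necessity: I will assume the convex condition \eqref{1.4} fails at some $x_0 \in \R^n$ and produce an open subset of $\R^n$ that is unreachable from $x_0$.

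First I would extract a separating covector. Since $G|_{x_0}$ is an $(n-1)$-dimensional subspace of $T_{x_0}\R^n$ and $0 \notin \Int\Con\{G_*f(x_0), G|_{x_0}\}$, a standard convex-separation argument yields a nonzero $\omega_0 \in T_{x_0}^*\R^n$ annihilating $G|_{x_0}$ and satisfying $\omega_0\bigl(G_*(y,x_0)(f|_y)\bigr) \geqs 0$ for every $y \in \I_G(x_0)$.

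Next I would promote this single-point inequality to a pointwise statement along the leaf $L := \I_G(x_0)$. By Lemma \ref{l1.1} together with (A2), $L$ is a closed embedded codimension-$1$ submanifold, and by Lemma \ref{l1.2} it separates $\R^n$ into open sets $U^+$ and $U^-$. Any closed embedded separating hypersurface in $\R^n$ is co-orientable, and in a local foliation chart $(y,z)$ with $L\cap U=\{z=0\}$, every vector field in $G$ has zero $\partial_z$-component, so its flow preserves the $z$-coordinate and hence acts by positive scalars on the one-dimensional normal quotient $T_y\R^n / G|_y$. Aligning the co-orientation with the sign selected by $\omega_0$, the previous inequality rephrases as: at every $y \in L$, the component of $f|_y$ normal to $L$ is nonnegative in the chosen co-orientation.

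Finally I would invoke a viability argument. Using co-orientability and separation, I build a smooth function $h \colon \R^n \to \R$ with $L = h^{-1}(0)$, $U^\pm = \{\pm h > 0\}$, $dh|_L$ nowhere vanishing, and $dh(f)|_L \geqs 0$; such an $h$ exists via a standard tubular-neighbourhood and partition-of-unity construction. Since each $g_i \in G$ is tangent to $L$, $dh(g_i)|_L = 0$, and therefore the admissible velocity set $f|_y + \Span\{g_i|_y\}$ at $y \in L$ lies in the closed half-space $\{v \in T_y\R^n : dh_y(v) \geqs 0\}$, which is exactly the Bouligand tangent cone to $\overline{U^+}=L\cup U^+$. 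A Nagumo-type viability theorem for differential inclusions with measurable bounded controls then implies that $\overline{U^+}$ is positively invariant along trajectories of \eqref{0.1}. Since $x_0 \in L$, this gives $\A(x_0) \subset \overline{U^+}$, contradicting $\A(x_0) = \R^n$ because $U^- \neq \varnothing$. I expect the main obstacle to lie in the second step, where the convex inequality at a single point must be promoted to a pointwise sign condition on $f$ along all of $L$; this hinges on the co-orientability of $L$ and on the orientation-preserving action of $G$-shifts on the normal line bundle.
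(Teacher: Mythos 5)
Your proposal is correct and follows essentially the same route as the paper's own proof: sufficiency is delegated to \cite[Theorem 6.1]{che10}, and necessity is obtained by using Lemma \ref{l1.2} to separate $\R^n$ along the leaf and the failure of (\ref{1.4}) to force $f$ to point only to one side of it, so that trajectories cannot cross the leaf in positive time. The only difference is one of detail: where the paper posits a non-vanishing $G$-invariant vector field $\xi$ and asserts the crossing is impossible, you make the same mechanism explicit via a supporting covector, the orientation-preserving action of $G$-shifts on the normal line, and a Nagumo-type invariance argument.
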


\begin{proof}
	The sufficiency was proved in \cite[Theorem 6.1]{che10}. Here we show the necessity.
	
	Suppose the condition (\ref{1.4}) is not satisfied, then $\forall x\in \R^n$ there exists a non-vanishing $G$-invariant smooth vector field $\xi$ on $\I_G(x)$ such that $\forall x\in\R^n$, $f|_x\in G^*|_x$, where $G^*|_x$ is the complement in $T_x\R^n$ of the connected component of $T_x\R^n\backslash G|_x$ containing $\xi|_x$; which is denoted as $f\in G^*$ for short.
	
	Since according to Lemma \ref{l1.2} the whole space is separated by $\I_G(x)$, and $\xi$ as a normal vector field actually gives an orientation of $\I_G(x)$, one can see that a trajectory of (\ref{0.1}) starting from $\I_G(x)$ cannot cross it in positive time, since $f\in G^*$ means that the direction of $f$ on $\I_G(x)$ is either zero or equal to the orientation normal vector field. So the system is not globally controllable.
\end{proof}

\begin{remark}\label{rem}
	The condition (\ref{1.4}) aims to separate $\A_f(x)$ and $\A_{-f}(x)$, the latter of which is the negative-time reachable set of (\ref{0.1}). 
	{By the Baker-Campell-Hausdorff formula, (\ref{1.4}) is actually equivalent to 
		\begin{align}\label{1.4a}
			0\in\Int\Con\{G|_x,\ad_Gf|_x\},~\forall x\in\R^n.
	\end{align}}
\end{remark}
Due to topological reasons, we cannot always find $n-1$ non-vanishing smooth vector fields that pointwise generate a codimension-$1$ distribution. But when this is achievable, the above criterion (\ref{1.4}) can be simplified to a form that is easier to verify.

\begin{corollary}\label{co1}
	Consider a codimension-$1$ system {satisfying {\bf (A1)(A2)}}. If there exist $n-1$ vector fields $\tilde{g}_1,\cdots,\tilde{g}_{n-1}$ such that $\forall x\in \R^n$, $G|_x=\Span_{\R}\{\tilde{g}_1|_x,\cdots,\tilde{g}_{n-1}|_x\}$, construct the following criterion function
	\begin{align}\label{1.5}
		C(x):=\det(f|_x,\tilde{g}_1|_x,\cdots,\tilde{g}_{n-1}|_x),
	\end{align}
	then the system is globally controllable if and only if for $\forall x\in \R^n$, $\exists y_1,y_2\in \I_G(x)$, s.t. $C(y_1)C(y_2)<0$, i.e. the criterion function (\ref{1.5}) changes its sign on each leaf.
\end{corollary}

The proof is straightforward by noticing that (\ref{1.5}) aims to verify whether $f$ gives an orientation of the submanifold $\I_G(x)$. This corollary this
corollary confirms partly the conjectures in \cite{su16}.

\section{Conditions for Controllability: Codimension $k>1$}\label{S3}

Now consider systems of higher codimensions. As an analogue of {\bf (A2)}, to ensure that integral manifolds are embeddings, we make the following assumption on the system (\ref{0.1}).
\begin{enumerate}
	\item[\bf (A2')] $\forall p\in \R^n$, given any $(n-k+1)$-dimensional cube in $\R^n$, $\I_G(p)$ is not dense in it {with respect to the $(n-k+1)$-dimensional Euclidean topology}.
\end{enumerate}

\begin{remark}
	In fact, one may change {\bf (A2')} to a weaker form: if for $\forall x$, $\I_G(x)$ is of finite depth \cite{ni77}, then each of them is an embedding (while the converse is not always true).
\end{remark}

We will show that Theorem \ref{t1.1} still holds for codimension-$2$ systems under the assumption {\bf (A2')}, and the condition (\ref{1.4}) is necessary and sufficient for global controllability of codimension $k$ ($k>2$) systems under certain restrictions.

First, we define the supporting distribution of vector fields.

\begin{definition}
	Consider a codimension-$k$ distribution $D$ on a manifold $M$, denote its complement in $\X(M)$ by $D^{\perp}$, i.e. $D|_x\oplus D^{\perp}|_x=T_xM$, $\forall x\in M$. Given a $(k-1)$-dimensional distribution $S\subset D^{\perp}$, a set $F\subset \X(M)$ is called $S$-supported along $D$ if $S$ is $D$-invariant and there exists a $D$-invariant non-vanishing smooth vector field $\xi\in H\backslash S$, such that $\forall x\in U$, $F|_x\subset S^+|_x$, where $S^+|_x$ is the complement in $D^{\perp}$ of the connected component of $(D^{\perp}\backslash S)|_x$ containing $\xi|_x$, and this is briefly denoted as $f\in S^+$.
\end{definition}

{Since $\Con\{G_*f(x),G|_x\}$ is an affine subspace in $T_xM$,} one can see that  under the assumption {\bf (A1)} any system (\ref{0.1}) which does not satisfy (\ref{1.4}) will allow a supporting distribution $S$ along $G$. Further, we have the following statement on the necessity of (\ref{1.4}) for a class of codimension-$k$ systems.

\begin{theorem}\label{t1.2}
	Consider the system (\ref{0.1}) of codimension $k$ satisfying {\bf (A1)(A2')}. If the condition (\ref{1.4}) is not satisfied and for the supporting distribution $S$ we have
	\begin{align}\label{1.6}
		f|_x\notin\Lie(S)|_x,~\forall x\in \R^n,
	\end{align}
	then the system is not globally controllable.
\end{theorem}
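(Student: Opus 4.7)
The plan is to mimic the separating-hypersurface strategy used in the proof of Theorem \ref{t1.1}. Since (\ref{1.4}) fails, the definition of supporting distribution supplies a $(k-1)$-dimensional, $G$-invariant smooth distribution $S\subset G^\perp$ together with a $G$-invariant non-vanishing section $\xi\in G^\perp\setminus S$ such that $f\in S^+$. Intuitively $\xi$ will play the role of an outward normal and $S^+$ the ``forbidden'' transverse half-space, exactly as in the codimension-$1$ argument.

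First I would assemble from $S$ and $G$ a codimension-$1$ involutive distribution whose leaves generalize those of $G$. The candidate is $\tilde{H}:=G+\Lie(S)$. Because $S$ is $G$-invariant, $[G,S]\subset S$, and iterating the Jacobi identity gives $[G,\Lie(S)]\subset\Lie(S)$; together with $[G,G]\subset G$ and the involutivity of $\Lie(S)$ itself this makes $\tilde{H}$ involutive. The hypothesis (\ref{1.6}), $f\notin\Lie(S)|_x$, is then used to show $\dim\tilde{H}|_x=n-1$ pointwise, with $\xi$ spanning a transverse line complementary to $\tilde{H}$; concretely, (\ref{1.6}) must prevent $\Lie(S)$ from spilling into either the $\xi$-direction or enough of $G$ to fill out $T_x\R^n$. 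This dimension-count is where I expect the main work to be, and it is also why no analogous assumption is needed in codimension $2$: there $S$ has rank $1$, is trivially involutive, and $\tilde{H}=G+S$ has codimension $1$ automatically.

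Next, I would invoke (A2'), or the finite-depth variant in the remark, to promote the codimension-$1$ leaves of $\tilde{H}$ to embedded submanifolds, by the same dichotomy as in Lemma \ref{l1.1}: nondensity inside every transverse cube rules out the only non-proper case. With embeddedness in hand, Lemma \ref{l1.2} (or rather the topological statement of \cite{fe88} applied directly to the $\tilde{H}$-leaves) yields that each leaf $\I_{\tilde{H}}(x)$ separates $\R^n$ into at least two connected components, with $\xi$ distinguishing one side. To conclude, each $g_i$ lies in $G\subset\tilde{H}$, so it is tangent to the leaf, while $f\in S^+$ has its $G^\perp$-component on the closed side opposite to $\xi$; hence $f+\sum u^i g_i$ always points into the closed $-\xi$-side of $\I_{\tilde{H}}(x)$, trapping $\A(x)$ in that closed half and contradicting global controllability. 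The chief obstacle, beyond the dimension count for $\tilde{H}$, is checking that the orientation picked out by $\xi$ is consistent \emph{globally} across a given $\tilde{H}$-leaf; this should follow from the $G$-invariance of both $S$ and $\xi$, which makes $\xi$ descend to a nonvanishing section of the transverse line bundle of $\tilde{H}$.
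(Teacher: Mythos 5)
Your proposal follows the same strategy as the paper's proof --- build a codimension-$1$ involutive distribution out of $G$ and the supporting distribution $S$, apply the separation argument of Lemma \ref{l1.2}, and trap trajectories on one side using $f\in S^+$ --- but you relocate the burden of proof in an instructive way. The paper works with $G\oplus S$, whose rank $n-1$ is automatic, and asserts that (\ref{1.6}) yields its involutivity; you work with $\tilde{H}=G+\Lie(S)$, whose involutivity is automatic (your bracket computation via $[G,S]\subset S$ and the Jacobi identity is correct), and assert that (\ref{1.6}) yields $\rank\tilde{H}=n-1$. These two assertions are in fact equivalent: since $G\oplus S\subset G+\Lie(S)$ and $G\oplus S$ already has corank $1$, one has $\rank(G+\Lie(S))=n-1$ everywhere if and only if $\Lie(S)\subset G\oplus S$, and the latter is exactly what involutivity of $G\oplus S$ amounts to once $[G,G]\subset G$ and $[G,S]\subset S$ are known. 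So the step you flag as ``where I expect the main work to be'' is precisely the clause the paper dispatches with no further argument, and your hesitation is legitimate: (\ref{1.6}) only says that $f$ pointwise avoids $\Lie(S)$, and a bracket $[s_1,s_2]$ could a priori acquire mixed components along $G$ and along $\xi$; such a vector avoids $f$ yet forces $G+\Lie(S)|_x=T_x\R^n$, destroying the separating hypersurface. Neither your proposal nor the paper closes this point, so at the crux the two arguments are at the same level of rigor; yours is actually more explicit on two matters the paper glosses over (embeddedness of the codimension-$1$ leaves via (A2'), and the global consistency of the coorientation $\xi$ along a leaf), and your observation about why no hypothesis like (\ref{1.6}) is needed in codimension $2$ matches Corollary \ref{c1.2} exactly.
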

\begin{proof}
	If $f\notin\Lie(S)$, then $G\oplus S$ is a codimension-$1$ involutive distribution; therefore each of its maximal integral manifolds will separate $\R^n$ into two components. By similar arguments as in the proof of Theorem \ref{t1.1}, since $f$ is $S$-supported along $G$, a trajectory starting from $x\in\R^n$ will remain in one connected component of $\R^n\backslash\I_{G\oplus S}(x)$ and its boundary, preventing the system from being controllable.
\end{proof}

Since $1$-dimensional distributions are always involutive, we have the following corollary immediately.

\begin{corollary}\label{c1.2}
	For codimension-$2$ systems satisfying {\bf (A1)(A2')}, the condition (\ref{1.4}) is necessary and sufficient for global controllability.
\end{corollary}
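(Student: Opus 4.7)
The plan is to deduce this directly from Theorem \ref{t1.2} by showing that its additional hypothesis (\ref{1.6}) is essentially automatic when $k=2$. Sufficiency is inherited from Theorem 6.1 of \cite{che10} exactly as in Theorem \ref{t1.1}, so I concentrate on necessity.

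Assuming (\ref{1.4}) fails, I would first produce a supporting distribution. At each $x\in\R^n$, the failure of the convex interior condition yields a hyperplane of $T_x\R^n$ through $0$ that contains $G|_x$ and has $\Con\{G_*f(x),G|_x\}$ on one closed side. In codimension $2$ this hyperplane decomposes uniquely as $G|_x\oplus S|_x$ for a line $S|_x\subset D^\perp|_x$. The $G$-invariance of the set of $G$-shifts of $f$ forces $S$ to be a $G$-invariant distribution, and together with a $G$-invariant transverse vector field $\xi\in D^\perp\setminus S$ this realises $f$ as $S$-supported along $G$.

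The crucial simplification is that $S$ is $1$-dimensional, hence automatically involutive: for any two sections $\alpha\xi_S,\beta\xi_S$ of $S$ one has $[\alpha\xi_S,\beta\xi_S]=(\alpha\xi_S(\beta)-\beta\xi_S(\alpha))\xi_S\in S$, so $\Lie(S)=S$ as distributions. Condition (\ref{1.6}) therefore collapses to the pointwise statement $f|_x\notin S|_x$. Whenever this holds, Theorem \ref{t1.2} immediately yields non-controllability and the argument is complete.

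The only remaining case is that $f|_x$ lies in $G|_x\oplus S|_x$ at some (or all) points, so (\ref{1.6}) may fail. Here I would bypass Theorem \ref{t1.2} and argue directly: $G\oplus S$ is a codimension-$1$ involutive distribution ($G$ and $S$ are both involutive and $[G,S]\subset S$ by $G$-invariance of $S$), and the $S$-supported condition, just as in the proof of Theorem \ref{t1.1}, forces every trajectory of (\ref{0.1}) to remain in the closure of a single component of $\R^n\setminus\I_{G\oplus S}(x)$. The main technical obstacle I anticipate is verifying that Lemma \ref{l1.2}'s separation property still applies to $G\oplus S$ under (A2'): a dense $(n-1)$-dimensional leaf of $G\oplus S$ would accumulate on itself and force some $G$-sub-leaf to become dense in an $(n-1)$-cube, contradicting (A2'). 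Once that transfer is secured, each leaf of $G\oplus S$ separates $\R^n$, trajectories are trapped, and non-controllability follows.
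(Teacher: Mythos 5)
Your proposal is correct, and it is in fact more careful than the paper's own treatment. The paper deduces the corollary from Theorem \ref{t1.2} in a single line: since a $1$-dimensional distribution is automatically involutive, $\Lie(S)=S$, and the corollary is declared immediate. Your first three paragraphs reproduce exactly this route (sufficiency from \cite{che10}, construction of the supporting line field, involutivity of $S$). The genuine difference is your last paragraph: you notice that Theorem \ref{t1.2} still carries the hypothesis (\ref{1.6}), which for $k=2$ reads $f|_x\notin S|_x$ for all $x$, and that this is \emph{not} automatic once (\ref{1.4}) fails. Indeed, the supporting line produced by the separating-hyperplane argument may contain the class of $f$ modulo $G$ at some points: for instance, when the $G$-shifts of $f$ sweep out a closed half-plane of $D^{\perp}|_x$, the supporting line is unique and contains boundary shifts, so (\ref{1.6}) fails at isolated points even though the system is plainly uncontrollable. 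That case is covered neither by Theorem \ref{t1.2} nor by the paper's concluding conjecture (whose hypothesis is $f\in\Lie(S)$ everywhere), so your direct argument --- that involutivity of $G\oplus S$ follows from involutivity of $S$ together with $G$-invariance rather than from (\ref{1.6}), and that the trapping argument of Theorems \ref{t1.1} and \ref{t1.2} only needs $f\in S^{+}$, whether or not $f$ is tangent to the leaf --- is precisely the missing step that makes the paper's ``immediately'' legitimate.

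One caveat on your anticipated technical obstacle. Your sketch that a dense leaf of $G\oplus S$ would ``force some $G$-sub-leaf to become dense in an $(n-1)$-cube'' is not a valid general foliation argument: a codimension-$1$ leaf can be dense while every codimension-$2$ sub-leaf inside it is proper, the density coming entirely from the transverse $S$-direction. The standard picture is the linear foliation of the $3$-torus whose $2$-dimensional leaves contain a rational circle direction and an irrational transverse direction: the big leaves are dense, yet the circle sub-leaves are embedded and nowhere dense in any $2$-cube. So any proof that (A2') forces the leaves of $G\oplus S$ to be proper on $\R^n$ must use more than (A2') as a local statement --- presumably simple connectivity of $\R^n$. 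This gap, however, is inherited from the paper itself: the proof of Theorem \ref{t1.2} asserts the separation property for $\I_{G\oplus S}(x)$ without deriving the non-density hypothesis of Lemma \ref{l1.2} from (A2'). On that point you are no worse off than the paper, and you at least flag the issue explicitly.
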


{We give an example of codimension 3 to illustrate the necessity of the condition.}

\begin{example}
	Consider a system (\ref{0.1}) in $\R^6$, where $m=2$, and
	\begin{align*}
		g_1(x_1,\cdots,x_6)&=(1,0,0,0,0,0)^{\T},\\
		g_2(x_1,\cdots,x_6)&=(0,1,x_1,0,0,0)^{\T},\\
		f(x_1,\cdots,x_6)&=(0,0,0,1,x_1,x_1^2)^{\T},
	\end{align*}
	one can check that it satisfies {\bf (A1)(A2)}. To be specific, $G=\Lie\{g_1,g_2\}=\Span_{\R}\{(1,0,0,0,0,0)^{\T},$ $(0,1,0,0,0,0)^{\T},$ $(0,0,1,0,0,0)^{\T}\}$, $\dim G=3$; $\Span_{\R}\{f,\Lie\{g_1,g_2,f\}\}=\R^6$. That is to say, this system is strongly attainable; however, choosing supporting distribution as $S:=(0,0,1,1,0,0)^{\T}$, it follows that the system satisfies (\ref{1.6}) but does not satisfy (\ref{1.4}) by Remark \ref{rem}. We can check that $(0,0,0,-1,0,0)^{\T}$ does not belong to the reachable set of $(0,0,0,0,0,0)$, since $(0,0,0,0,0,0)$ is not in the interior of $\Con\{g_1,g_2,\ad_G{f}\}$.
\end{example}


\begin{remark}\label{p1.1}
	Consider the control-affine system (\ref{0.1}) of codimension $k$ satisfying {\bf (A1)(A2)}. If (\ref{1.4}) is not satisfied and the corresponding supporting distribution $S$ of $f$ is involutive, then $\forall x\in \R^n$, there exists a neighbourhood of $\I_G(x)$ such that all the leaves in this neighbourhood are partially ordered by the flow of $f$. However, if the supporting distribution Lie-generates $f$, this local partial order on the foliation cannot be properly defined, since in that case, $f$ may be recurrent \cite{bo19}, allowing a point to move from leaves of lower order to those of higher order. 
\end{remark}

Note that the topological structure of a manifold tangent to a distribution $S$ may be complicated when $S$ is not involutive; \cite{ba} has shown that the Hausdorff dimension of a $k$-dimensional characteristic submanifold tangent to a $k$-dimensional non-involutive distribution is less than $k-1$, so, in that case, the product manifold will certainly not separate the whole state space.

%

Finally, as the sufficiency of the condition (\ref{1.4}) for global controllability has been proved for codimension-$k$ systems \cite{che10}, we conjecture that the condition in theorem \ref{t1.2} is the one needs to make (\ref{1.4}) necessary and sufficient.

\begin{conjecture}
	Consider (\ref{0.1}) satisfying {\bf (A1)(A2')}. If (\ref{1.4}) is not satisfied and for the corresponding  supporting distribution $S$ one has $f\in \Lie(S)$, then the system is globally controllable.
\end{conjecture}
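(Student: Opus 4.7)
The natural strategy is to prove the contrapositive: assuming system~(\ref{0.1}) fails to be globally controllable, exhibit a supporting distribution $S$ along $G$ for which $f\notin\Lie(S)$, directly contradicting the hypothesis. Pick $x_0\in\R^n$ with $\A(x_0)\neq\R^n$. After passing to the orbit of $x_0$ (and observing that if the orbit is a proper submanifold the system is trivially non-controllable and we reduce there), we may assume $\Lie\{f,g_1,\ldots,g_m\}$ spans $T\R^n$, so that $\A(x_0)$ has nonempty interior. The first task is to produce a point $p\in\partial\A(x_0)$ at which the boundary is a $C^1$ hypersurface with tangent hyperplane $H_p\subset T_p\R^n$; this would draw on the standard analysis of reachable-set boundaries via the Pontryagin maximum principle, which guarantees generic smooth pieces.

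Next I would show that $G|_p\subset H_p$. Since any $g\in G=\Lie\{g_i\}$ belongs to the control Lie algebra, its flow is bidirectionally approximable by admissible trajectories, so $\varphi_t^{g}(p)\in\partial\A(x_0)$ for $|t|$ small, forcing $g|_p\in H_p$. Write $H_p=G|_p\oplus S|_p$ with $S|_p\subset G^\perp|_p$ of dimension $k-1$, and extend $S$ across the leaf $\I_G(p)$ by transporting $S|_p$ along the flows of $G$. The $G$-orbit of $p$ remains on $\partial\A(x_0)$, so the extended $S$ is tangent to $\partial\A(x_0)$ along this leaf and is automatically $G$-invariant. Forward-invariance of $\A(x_0)$ under the flow of $f$ forces $f|_p$ to lie in the inward half-space of $T_p\R^n\setminus H_p$, i.e.\ $f\in S^+$ in the terminology of the paper, so $S$ is a bona fide supporting distribution for $f$ along $G$.

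The decisive step is to conclude $f\notin\Lie(S)$. Every vector field in $S$ is tangent to the smooth hypersurface $\partial\A(x_0)$ by construction; since Lie brackets of fields tangent to a submanifold are again tangent to that submanifold, every element of $\Lie(S)$ is tangent to $\partial\A(x_0)$ at $p$. But $f|_p$ is strictly inward-pointing by the choice of $p$ and is transverse to $H_p=T_p(\partial\A(x_0))$, so $f|_p\notin\Lie(S)|_p$. This yields a supporting distribution $S$ with $f\notin\Lie(S)$, contradicting the standing hypothesis and completing the contrapositive.

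The main obstacle, which I suspect is why the paper leaves the statement open, is the opening regularity step: in full generality the reachable set's boundary is only closed and may be highly singular, with the smooth-hypersurface structure and the transverse decomposition $H_p=G|_p\oplus S|_p$ failing on a large set of boundary points. Guaranteeing such a regular $p$ under (A1) and (A2'), and globalising the locally defined $S$ to all of $\R^n$ consistently across distinct leaves of the $G$-foliation, is the delicate analytic ingredient. The recurrence behaviour for $f$-trajectories alluded to in the remark after Proposition~\ref{p1.1} is, I believe, precisely what obstructs the naive extension in the $f\in\Lie(S)$ regime and is the reason one expects controllability there: any would-be boundary hypersurface cannot survive the non-integrability transverse to $G\oplus S$ generated by $f\in\Lie(S)\setminus S$.
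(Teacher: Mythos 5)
The statement you set out to prove is recorded in the paper as a \emph{conjecture}: the paper offers no proof of it, so there is no argument of record to compare yours against, and your proposal must stand on its own. It does not; it is a program rather than a proof, and the gap you flag yourself is the fatal one. The entire construction of $S$ presupposes a point $p\in\partial\A(x_0)$ near which the boundary of the reachable set is an embedded $C^1$ hypersurface with tangent space $H_p$. Neither the Pontryagin maximum principle nor Krener-type accessibility results supply such a point: the PMP says boundary points are reached by extremal trajectories, which confers no manifold structure whatsoever on $\partial\A(x_0)$, and for smooth affine systems with drift the reachable boundary can be badly non-smooth, with no ``generic smooth piece'' theorem of the kind you invoke available. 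Since the hyperplane $H_p$, the splitting $H_p=G|_p\oplus S|_p$, and the tangency of $\Lie(S)$ to the boundary all live on this hypothetical smooth piece, nothing downstream of the first step survives without it.

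Even granting a regular boundary point, two further steps break. First, forward invariance of $\overline{\A(x_0)}$ under the $f$-flow gives only that $f|_p$ points \emph{weakly} inward: $f|_p$ may perfectly well lie in $H_p$ (the $f$-trajectory through $p$ can remain in the boundary), and then your concluding contradiction --- $f|_p$ transverse to a hyperplane containing $\Lie(S)|_p$ --- evaporates. Under the standing hypothesis $f\in\Lie(S)$ this tangential situation is exactly what one expects, so it cannot be waved off as exceptional. Second, there is an identification problem: what you construct is a germ of a distribution along the single leaf $\I_G(p)$ (with an unaddressed holonomy issue, since $G$-transport of $S|_p$ along different paths in the leaf need not agree), whereas the conjecture's hypothesis concerns ``the corresponding supporting distribution $S$'' --- a smooth, globally defined, $G$-invariant $(k-1)$-dimensional distribution in the paper's sense, and such distributions are not unique. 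Producing one local supporting-like object $S'$ with $f|_p\notin\Lie(S')|_p$ does not contradict $f\in\Lie(S)$ for the $S$ named in the hypothesis unless $S'$ is extended to a genuine global supporting distribution on $\R^n$, which is precisely the globalization you defer. Your geometric intuition --- that the boundary of a non-trivial reachable set should be swept out by $G\oplus S$-type leaves, mirroring the mechanism of Theorem \ref{t1.2} run in reverse --- is the natural one, but every step needed to make it rigorous is open, which is presumably why the paper states this as a conjecture rather than a theorem.
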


The argument above can be applied analogously to switched systems, making the conditions in \cite{che06} necessary and sufficient. We state the conclusion as follows.
\begin{theorem}
	Consider a switched control-affine system
	\begin{align}\label{4.3}
		\dot{x}(t)=f^{\sigma(t)}|_{x(t)}+\sum_{i=1}^{m}u^i(t)g_i^{\sigma(t)}|_{x(t)}
	\end{align}
	where $\sigma:[0,+\infty\}\ra\{1,\cdots,N\}$ is a measurable right-continuous mapping called the switching signal, and $\{f^j\}_{j=1}^{N}$, $\{g_i^j\}_{i=1,\cdots,m}^{j=1,\cdots,N}$ are smooth vector fields on $\R^n$. Let $G:=\Lie\{g_i^j\}_{i=1,\cdots,m}^{j=1,\cdots,N}$, $F:=\{f^j\}_{j=1}^{N}$, then if $G$ satisfies assumptions {\bf (A1)(A2')} and 
	\begin{align}\label{4.4}
		0\in\Int\Con\{G_*F(x),G|_x\},\quad\forall x\in\R^n,
	\end{align}
	then (\ref{4.4}) is necessary and sufficient for global controllability of system (\ref{4.3}) of codimension-$1$ and codimension-$2$, and when the system (\ref{4.3}) is regular of codimension $k>2$ satisfying
	\begin{align}\label{4.5}
		F|_x\nsubseteq\Lie(S)|_x,~\forall x\in \R^n,
	\end{align}
	the system satisfies (\ref{4.4}) if it is globally controllable.
\end{theorem}

\section{Motion Planning of Drifted Control-Affine Systems}

In this section, we consider motion planning problems of the system (\ref{0.1}). For a class of codimension-$1$ systems, we directly lift curves in state spaces to control spaces in two steps; as for general affine systems, we extend the HCM to some special kinds of strong bracket-generating drifted affine systems.

{From now on, assume the manifold $M$ on which the control system (\ref{0.1}) is defined to be diffeomorphic to $\R^n$ with Euclidean topology.}

\subsection{Simplest Case of Codimension $1$}

The simplest case is as in Corollary \ref{co1}: the distribution is globally generated by $n-1$ non-vanishing vector fields, and we assume that the points where $f\in G$ form a connected set of zero measure. Given $x\in M$ and an orientation of $\I_G(x)$, the whole space is then divided into three parts: the set above $\I_G(x)$, the set beneath $\I_G(x)$ with $f$ along the positive orientation, and the set beneath $\I_G(x)$ with $f$ along the negative orientation.

When the starting and ending points are on different sides of the separating plane, we design the algorithm in several steps: first, drive the trajectory along the direction where $C(x)$ diminishes; when the orientation of the drift vector field is reversed, then for any two points $p,q$ lying in a partially ordered foliated space as described in Remark \ref{p1.1} and for any curve $\gamma(t)$ connecting $p$ and $q$ flowing from higher order slices to lower order ones, that is to say, one may lift it to an admissible trajectory of \ref{0.1}.

Taking $\langle\cdot,\cdot\rangle$ as conventional Euclidean inner product, a curve $\gamma:[0,T]\ra M$ is called along a vector field $f$ if it satisfies $\langle \dot{\gamma}(t),f|_{\gamma(t)}\rangle>0$, $\forall t>0$. We restate the above algorithm driving the trajectory from a point $x$ above $\I_G(x)$ to a point $y$ below it. Assume the controls are bounded by $K$, and the sampling time is $T$.
\begin{enumerate}
	\item Step 1. Find a point $p\in \{q|C(q)=0\}$, draw a curve $\gamma_1:[0,1]\ra M$ along $f$ connecting $x$ and $p$;
	\item Step 2. Draw a curve $\gamma_2:[0,1]\ra M$ along $f$ connecting $p$ and $y$.
\end{enumerate}
The concatenation of $\gamma_1$, $\gamma_2$ denoted by $\gamma$, will be an admissible curve of (\ref{0.1}) and giving the control value at each point as the coefficients of the linear span of $\dot{\gamma}(t)-f|_{\gamma(t)}$ with respect to $\{g_i|_{\gamma(t)}\}_{i=1}^m$. {In application, one may choose the curves $\gamma(t)$ as bounded, and there is a tradeoff between the bound of controls and the reaching time.}
\begin{example}\label{ex2}
	Consider a system (\ref{0.1}) with $m=3$ in $\R^4$, let the coordinate be $(x_1,x_2,x_3,x_4)$, the controls lie in $[-2,2]$, and 
	\begin{align*}
		g_1&=(1,0,0,0), ~g_2=(0,1,x_2,0),\\
		g_3&=(0,0,1,0), ~f=(0,0,1,x_3).
	\end{align*}
	One can check that the zeros of the criterion function (\ref{1.5}) is $H:=\{(x_1,x_2,x_3,x_4)\in \R^4|x_3=0\}$. If we want to steer the system from $x=(0,0,1,1)$ to $y=(0,0,-2,-1)$, which are points lying on different sides of the hypersurface spanned by $g_1,g_2,g_3$, then using the two-step algorithm above, we may first let $u(t)=(0,0,-2)$ to drive $x$ to $p=(0,0,0,\frac{3}{2})\in H$ during time $t\in[0,1]$, and let $u(t)=(0,0,-\frac{9}{5})$ drive $p$ to $y$ during time $t\in[1,\frac{7}{2}]$.
	
\end{example}

\subsection{HCM For Drifted Affine Systems}
If the system (\ref{0.1}) is globally controllable, while the distribution $G$ is regular but not globally generated, we use the HCM to compute the steering control, since the solution to the HLE exists globally on the domain.

We will show that the argument in \cite{ch06}\cite{su93} still holds for a class of drifted systems.

In this section we make the following assumptions: 
\begin{enumerate}
	\item[\bf (A3)]The driftless part of the system (\ref{0.1}) is strong bracket-generating \cite{ch06}, i.e. $\forall\eta\in\R^m\backslash\{0\}$, $\forall x\in \R^n$, denote $\eta\cdot g:=\sum_{i=1}^m\eta_ig_i$,
	$$
	\Span\Big\{\{g_i|_x\}_{i=1}^m,\{[\eta\cdot g,g_i]|_x\}_{i=1}^m\Big\}=T_xM;
	$$
	\item[\bf (A4)] The drift vector field $f$ satisfies $$[f,g_i]|_x\in \Span\{g_i|_x\}_{i=1}^m, ~i=1,\cdots,m.$$
\end{enumerate}

We define the endpoint map of system (\ref{0.1}) with respect to $p\in M$ as
$\E_p:(0,+\infty)\times{\cal U}\ra M$, $(T,u(t))\mapsto \gamma_{u,p}(T)$, where $\gamma_{u,p}:[0,T]\ra M$ is the trajectory of the system (\ref{0.1}) starting at $p$ and corresponding to $u(t)$.

The tangent of the exponential map is 
$$
\begin{array}{rl}
	~&D\E_p|_{T,u}(\tau,v)\\
	=&\int_{0}^{1}\tau(P^{T,u}_{t,1})_*f|_{\gamma_u(t)}+\sum_{i=1}^{m}v^i(t)(P^{T,u}_{t,1})_*g_i|_{\gamma_u(t)}\dd t
\end{array}
$$
where $P^u_{t,1}$ is the diffeomorphism generated by (\ref{0.1}) that maps a point $p$ to $\gamma_{u,p}(1)$, where $\gamma_{u,p}$ is the solution to (\ref{0.1}) that satisfies $\gamma(t)=p$; $(P^{T,u}_{t,1})_*$ denotes its tangent map. A control that degenerates $D\E_p$ is called abnormal, and its image under $\E|_p$ is called the singular set.

The adjoint map of $D\E_p$ is given by 
\begin{align*}
	(D\E_p|_{T,u})^*:T^*M&\ra (0,+\infty)\times{\cal U}\\
	z&\mapsto(\tilde{T},\vphi_{1,u,z},\cdots,\vphi_{m,u,z})
\end{align*}
where $z\in T^*M$, $\vphi_{i,u,z}(t):=\langle\lambda_z(t),g_i|_{\gamma_{u,p}(t)}\rangle$, $i=1,\cdots,m$ are called the switching functions, with $(\gamma_{u,p},\lambda_z):[0,T]\ra T^*M$ being the solution of the Hamiltonian system satisfying $\gamma_{u,p}(0)=p$, $\lambda_z(T)=z$ with respect to the following Hamiltonian:
\begin{align}\label{5.2}
	H_u(x,z):=\langle z,f|_x\rangle+\sum_{i=1}^mu^i\langle z,g_i|_x\rangle-\frac{\|u\|^2}{2},
\end{align}
that is to say, $\gamma_{u,p}(t)$ is a controlled trajectory of (\ref{0.1}) and $\lambda_z$ solves the following adjoint equation with terminal value $z$:
\begin{align}\label{5.4}
	\dot{\lambda}(t)=-\lambda(t)\big(Df|_{\gamma_{u,p}(t)}+\sum_{i=1}^mu^i(t)Dg_i|_{\gamma_{u,p}(t)}\big).
\end{align}

From now on, $\|\cdot\|$ stands for the Euclidean norm; denote by $\vphi(t)=(\vphi_{1,u,z}(t),\cdots,\vphi_{m,u,z}(t))$ when there is no misunderstanding on $u$ and $z$ corresponding to the switching function.

Define $b_i(t):=\langle\lambda_z(t),[f,g_i]|_{\gamma_{u,p}(t)}\rangle$, $i=1,\cdots,m$, where $(\gamma_{u,p},\lambda_z)$ is the solution to the above Hamitonian system. Since $[f,g_i]\in \Span\{g_i\}_{i=1,\cdots,m}$, denoting $b(t):=(b_1(t),\cdots,b_m(t))$, we have $b(t)=E(t)\vphi(t)$, where $E(t)$ is an $n$-dimensional time-varying square matrix. Define $B(t):=\int_0^tb(s)\dd s$. When $(\gamma(t),\lambda(t))$ lies on a compact set, obviously $\exists E>0$, s.t. $\|B(t)\|\leqslant E\int_0^t\|\vphi(s)\|\dd s$.

In the drifted case under assumption {\bf (A4)},  the switching function satisfies
\begin{align}\label{5.3}
	&\dot{\vphi}(t)=u(t)\psi(t)+b(t)=u(t)\psi(t)+E(t)\vphi(t),\\\nonumber
	&\psi(t):=(\psi_{ij}(t))_{m\times m}, \quad\psi_{ij}(t):=\langle \lambda_z(t),[g_i,g_j]_{\gamma_{u,p}(t)}\rangle,
\end{align}
with $\gamma_{u,p}$, $\lambda_z$ defined as before in (\ref{5.2})(\ref{5.4}). Then under assumption {\bf (A3)}, when $\dot{\vphi}=0$, $\psi$ is nonsingular, hence the only abnormal control is $u\equiv0$, and the singular set is hence $\{\vphi^f_t(p)\}_{t>0}$.

Similar to the case of \cite{su93}, to prove the global existence on $[0,1]$ of the solution to the PLE (\ref{0.3}) for system (\ref{0.1}), we only need to show the norm of the adjoint map of $D\E_p|_{T,u}$ is of linear growth with respect to $\|(T,u)\|_{L^2}$, that is to say, for any compact set $K\subset M$, $\exists c>0$, s.t.
$$
\sum_{i=1}^m\int_0^1(\vphi_{i}(t))^2+T^2\dd t\geqslant\frac{c\|z\|^2}{1+\|(T,u)\|_{L^2}^2}
$$
holds for all $u\in \cal U$ satisfying $\E_p(u)\in K$, $\forall z\in T^*_{\E_p(u)}M$.

\begin{theorem}\label{t5.1}
	Consider the system (\ref{0.1}) satisfying (\ref{1.4}), assuming ${\cal U}=L^2([0,T],\R^m)$. If (\ref{0.1}) is SBG and the singular set is $\I_f^+$, then for any compact subset $K\subset M\backslash\{\vphi^f_t(p)\}_{t\geqs0}$, $\exists c_K>0$ s.t. $\forall u\in \E_p^{-1}(K)$, $\forall z\in T^*_{\E_p(u)}M$ with $\|z\|=1$, we have
	$$
	\int_{0}^1\|u(t)\|\dd t\int_0^1\|\vphi(t)\|\dd t>c_K,
	$$
	where $\vphi(t)=(\vphi_{1,u,z}(t),\cdots,\vphi_{m,u,z}(t))$, and $\lambda(t)$ is the solution ending at $z$ of the adjoint equation (\ref{5.4}) along $\gamma_{u,p}(t)$, $u(t)$.
\end{theorem}
\begin{proof}
	For $\alpha>0~\text{s.t.}~d(p,K)>\alpha$, define two compact sets 
	\begin{align*}
		K^*&=\{(x,z)\in T^*M|x\in K,\|z\|=1\},\\
		K^*_{\alpha}&=\{(x,z)\in T^*M|d(x,K)<\alpha,\frac{1}{2}<\|z\|<2\},
	\end{align*}
	such that $K^*\cap (T^*M\backslash K^*_{\alpha})=\varnothing$.
	Construct a function $\theta:T^*M\ra\R$ satisfying $\theta|_{K^*}\equiv0$, $\theta|_{T^*M\backslash K^*_{\alpha}}>1$, and denote in brief by $\theta(t)$ the composition $\theta(\gamma(t),\lambda(t))$. Obviously $(\gamma(0),\lambda(0))\in T^*M\backslash K^*_{\alpha}$ and $(\gamma(T),\lambda(T))\in K^*$. Then $\exists s\in(0,1)$ s.t. $\theta(s)=1$. Since $\theta(1)=0$, it follows that
	$$
	\Big|\int_s^1\dot{\theta}(t)\dd t\Big|=\Big|\int_s^1\langle u(t),\mu(t)\rangle\dd t\Big|>1,
	$$
	where $\langle u(t),\mu(t)\rangle:=\sum_{i=1}^mu^i(t)\mu_i(t)$, $\mu_i(t):=L_{f^*_i}\theta(t)$ ($L_{g_i^*}$ is the directional derivative of a function along the vector field $g_i^*\in\X(T^*M)$ which is the dual vector field of $g_i$ with respect to the canonical symplectic form), $i=1,\cdots,m$.
	
	Choose a positive constant $\rho>0$. If $\|\vphi(t)\|>\rho$ on $[s,1]$, then $\exists C_{\mu}>0$, s.t.
	$$
	\begin{array}{rl}
		\int_s^1\langle u(t),\mu(t)\rangle\dd t&<\int_s^1\frac{\|\vphi(t)\|}{\rho}\langle u(t),\mu(t)\rangle\dd  t\\
		~&<\frac{C_{\mu}}{\rho}\int_s^1\|\vphi(t)\|\dd t\int_s^1\|u(t)\|\dd t,
	\end{array}
	$$
	and hence 
	$$
	\int_{0}^1\|u(t)\|\dd t\int_0^1\|\vphi(t)\|\dd t>\frac{\rho}{C_{\mu}}>0.
	$$
	
	Next, we consider the case when $\|\vphi(t)\|<\rho$ on $[s,1]$.
	
	Define 
	$$
	A(x,t):=\frac{1}{\det(\psi(t))}\langle x\psi^+(t),\mu(t)\rangle, ~x\in \R^m,
	$$ where $\psi^+$ is the matrix of the complementary minors of $\psi$. Then by (\ref{5.3}), integrating by parts yields
	$$
	\begin{array}{rl}
		~&\int_s^1\langle u(t),\mu(t)\rangle\dd t\\
		=&\int_s^1A(\dot{\vphi}(t)-b(t),t)\dd t\\
		=&A(\vphi(1)-B(1),1)-A(\vphi(s)-B(s),s)\\
		~&-\int_s^1D_2A(\vphi(t)-B(t),t)\dd t
	\end{array}
	$$
	where $D_2A$ is the partial derivative of $A$ with respect to the second variable. Since $A(v,t)$ is bounded by $C_1\|v\|$ where $C_1>0$, and when $\|\vphi\|\leqs\rho$, $\|\vphi(t)-B(t)\|$ is bounded by $C_2\rho$, we have 
	$$
	A(\vphi(1)-B(1),1)-A(\vphi(s)-B(s),s)\leqs C_3\rho=2C_1C_2\rho,
	$$
	and $D_2A(v,t)\leqs C_4\|\vphi(t)-B(t)\|\|u(t)\|$, hence
	$$
	\begin{array}{rl}
		~&\int_s^1D_2A(\vphi(t)-B(t),t)\dd t\\
		\leqs& C_4(\int_s^1\|\vphi(t)\|\|u(t)\|\dd t+\int_0^1E\|\vphi(t)\|\dd t\int_s^1\|u(t)\|\dd t)\\
		\leqs&C_4\int_0^1\|\vphi(t)\|\dd t\int_0^1\|u(t)\|\dd t.
	\end{array}
	$$
	Therefore we have 
	$C_4\int_0^1\|\vphi(t)\|\dd t\int_0^1\|u(t)\|\dd t+C_3\rho\geqslant1$.
	Adjust $\rho$ such that $C_3\rho\leqs\frac{1}{2}$, it follows that 
	$$
	\int_{0}^1\|u(t)\|\dd t\int_0^1\|\vphi(t)\|\dd t>\frac{1}{C_4},
	$$
	and $C_K=\min\{\frac{\rho}{C_{\mu}},\frac{1}{C_4}\}$ is the lower bound we desired.
	
	In the above discussion, we assume conditions on the norm of $\vphi$ on the whole interval; obviously the argument still holds if $\|\vphi(t)\|\geqs\rho$ ($\leqs\rho$) on some subinterval.
\end{proof}

Theorem \ref{t5.1} shows that any nonsingular trajectory on a compact subset of the state space can be lifted to a trajectory in the control space by solving the PLE (\ref{0.3}), giving the steering control as the final value of the solution.

\section{Conclusion}

In this paper, we studied the conditions for global controllability of affine systems with drifts. Under the assumptions of certain regularities, necessary and sufficient conditions have been proved for codimension $1$ and $2$ systems, and necessary conditions are given for specific systems of higher codimensions. Then, the motion planning problems for globally controllable drifting affine systems are investigated, improving the applicability of the HCM method.

The key point of the controllability condition is the separation property of foliations. The discussions can all be extended to systems on simply connected manifolds since Lemma \ref{l1.2} holds for any (edgeless) manifold $M$ satisfying $H_1(M,\Z/2\Z)=0$. Further, if $M$ is compact, {\bf (A2)} can be replaced by that each leaf $\I_G(x)$ is of finite depth since this is equivalent to that each $\I_G(x)$ is an embedded submanifold.

\bibliographystyle{IEEEtran}

\end{document}